\begin{document}
\newtheorem{proposition}{Proposition}
\def\E{\operatorname{\bf E}}
\def\P{\operatorname{\bf P}}
\def\1{{\bf 1}}
\def\C{{\mathbb C}}
\renewcommand{\Im}{\operatorname{\rm Im}}
\renewcommand{\Re}{\operatorname{\rm Re}}
\def\process#1{#1=\{#1_t\}_{t\geq 0}}
\newtheorem{theorem}{Theorem}[section]
\newtheorem{remark}{Remark}[section]
\newtheorem{example}{Example}[section]
\newtheorem{lem}{Lemma}[section]
\newtheorem{assumption}{Assumption}
\newtheorem{corollary}{Corollary}[section]
\newtheorem{propo}{Proposition}[section]
\newtheorem{conj}{Conjecture}
\newtheorem{defi}{Definition}
\def\la{\lambda}
\def\beq{\begin{equation}}
\def\eeq{\end{equation}}
\newcommand{\R}{I \! \! R}

\title{
Symmetry and Time Changed Brownian Motions\thanks{J. Fajardo
Thanks CNPq for financial support}}
\author{Jos\'{e} Fajardo\thanks{IBMEC Business School, Rio de
Janeiro - Brazil, e-mail: pepe@ibmecrj.br}\hspace{0.2cm}  and
 Ernesto Mordecki\thanks
{Centro de Matem\'atica, Facultad de Ciencias, Universidad de la
Rep\'ublica, Montevideo. Uruguay. e-mail: mordecki@cmat.edu.uy.}}
\date{\today}
\maketitle
\begin{abstract}
In this paper we examine which Brownian Subordination with drift
exhibits the symmetry property introduced by
\citeasnoun{FajardoMordecki2006b}. We  obtain that when the
subordination results in a Lévy process, a necessary and
sufficient condition for the symmetry to hold is that drift must
be equal to -1/2.
\end{abstract}

{\bf  Key Words:} {Time Changed, Subordination, Symmetry.}\\

\section{Introduction}
\hspace{0.50cm}Asset returns have been studied during many years,
one of the main findings is the presence of many small jumps in a
finite time interval. To deal with that fact more realistic jump
structures have been suggested, as for example the Generalized Hyperbolic (GH) model of \citeasnoun{EberleinKellerPrause98}, the Variance-Gamma (VG) model of \citeasnoun{MadanCarrChang98} and the CGMY model of \citeasnoun{CGMY02}. All that models are included in a huge family called L\'{e}vy processes.\\

By a result due to \citeasnoun{monroe}, we know that every
semimartingale can be written as a time-changed Brownian motion.
As a consequence many Lévy processes can be represented as
Time-changed Brownian Motion. This fact is very useful for the
pricing of multiasset derivatives, since we can correlate assets
by correlating the Brownian motions. \\

On the other hand, it is not easy to express explicitly the
time-change used. As for example in the case of CGMY process
introduced by \citeasnoun{CGMY02} and the Meixner process,
introduced by \citeasnoun{Grigelionis99} and
\citeasnoun{Schoutens2002}. The time-changes used for these
processes have been obtained recently by \citeasnoun{madanyor06}.\\

In this paper, based on this explicit time-changes, we study how
does the symmetry concept, introduced by
\citeasnoun{FajardoMordecki2006b}, works in the CGMY and Meixner
models and also in other Lévy processes obtained by Subordinating
Brownian motion with
drift.\\

The paper is organized as follows: in Section 2, we introduce
Time-changed Brownian Motion. In Section 3, we describe the market
model. In Section
 4, we describe symmetry
and obtain our main result. In last sections we have the
conclusions and an appendix.

\section{Time-Changed Brownian Motion}

\hspace{0.50cm} Let ${X}=(X^1,\dots,X^d)$ be a $d$-dimensional
L\'evy process respect to the complete filtration
$\mathbf{F}=\{\mathcal{F}_t, t\geq 0\}$, this process is defined
on the probability space $(\Omega, \mathbf{F},P)$, in other words
$X$ is a càdlàg process with independent and
stationary increments.\\

We know by the Lévy-Khintchine formula that the characteristic
function of $X_t$, $\phi_{X_t}(z)\equiv {\sf
E}e^{izX_t}=\exp(t\psi(z))$ where the characteristic exponent
$\psi$ is given by:

\beq \label{e:charexp} \psi(z)=i(b,z)-\frac 12 (z,\Sigma
z)+\int_{\R^d}\Big(e^{i(z,y)}-1-i(z,y){\bf 1}_{\{|y|\le
1\}}\Big)\Pi(dy), \eeq where $b=(b_1,\dots,b_d)$ is a vector in
$\R^d$, $\Pi$ is a positive measure defined on
$\R^d\setminus\{0\}$ such that $\int_{\R^d}(|y|^2\wedge 1)\Pi(dy)$
is finite, and $\Sigma=((s_{ij}))$ is a symmetric nonnegative
definite matrix, that can always be written as $\Sigma=A'A$ (where
$'$ denotes transposition) for some matrix $A$.\\

Now let $t \mapsto \mathcal{T}_t,\;\;, t\geq 0,$ be an increasing
cádlág process, such that for each fixed $t$, $\mathcal{T}_t$ is a
stopping time with respect to $\mathbf{F}$. Furthermore, suppose
$\mathcal{T}_t$ is finite and positive $P-a.s.,\;\forall t\geq 0$
and $\mathcal{T}_t\rightarrow\infty$ as $t\rightarrow\infty$. Then
$\{\mathcal{T}_t\}$ defines a random change on time, we can also
impose
$E\mathcal{T}_t=t$.\\

Now let $X_t=W_t$ be a Brownian motion. Then, consider the process
$Y_t$ defined by:
$$Y_t\equiv X_{\mathcal{T}_t},\;\;t\geq 0,$$
this process is called Time-changed Brownian Motion. Using
different time changes $\mathcal{T}_t$, we can obtain a good
candidate for the underlying asset return process. We know that if
$\mathcal{T}_t$ is a Lévy process we have that $Y$ would be
another Lévy process\footnote{See (\citeasnoun[Th. 4.2 pag.
108]{CT04})}. A more general situation is when $\mathcal{T}_t$ is
modelled by a non-decreasing semimartingale:
\begin{equation} \mathcal{T}_t=a_t+\int_0^t\int_0^\infty y\mu
(dy,ds)\label{carrwu}
\end{equation}
 where $a$ is a drift and $\mu$ is the counting measure of
jumps of the time change. Now we can obtain the characteristic
function of $Y_t$:
$$\phi_{Y_t}(z)=
{\sf E}(e^{iz'X_{\mathcal{T}_t}})={\sf E}\left({\sf E}\left
(e^{iz'X_u}\mid T_t=u\right)\right)$$ If $\mathcal{T}_t$ and $X_t$ are
independent, then:

\begin{equation}
\phi_{Y_t}(z)=\mathcal{L}_{\mathcal{T}_t}(\psi(z)),\label{char}
\end{equation}
where $\mathcal{L}_{\mathcal{T}_t}$ is the Laplace transform of
$\mathcal{T}_t$. So if the Laplace transform of $\mathcal{T}$ and
the characteristic exponent of $X$ have closed forms, we can
obtain a closed form for $\phi_{Y_t}$, as we show in the next
examples.\\

In this way we can obtain the distribution of $Y_t$ for every $t$
and in this way we can price some derivatives.

\subsection{Subordinators}
We say that  ${\mathcal{T}_t}$ is a
{\it Subordinator}
if it is a Lévy Processes with non decreasing trajectories.
As a consequence, trajectories take positive values almost sure.
These properties are necessary in order to make a time change,
a desired fact for a Time-changed and the choose of a Lévy
process will allow us to obtain as a result a very good candidate
to model asset returns.\\

\subsubsection{Stable subortination} Now let ${\mathcal{T}_t}$ be a
$\alpha-$Stable with zero drift and $\alpha\in (0,1)$, that is a
Lévy process with Lévy measure given by:

$$
\rho(x)= \frac {A}{x^{1+\alpha}},\;\;x>0,$$ we can compute the
Laplace transform of ${\mathcal{T}_t}$ :
$$
\mathcal{L}_{\mathcal{T}_t}(z)=A\int_0^\infty
\frac{e^{zx}-1}{x^{\alpha+1}}dx=-\frac
{A\Gamma(1-\alpha)}{\alpha}(-z)^\alpha,
$$
Let $X_t$ be a symmetric $\beta-$stable process,
that is using eq. (\ref{e:charexp}) we have
$$\psi(z)=-B|z|^\beta,$$
where $A$ and $B$ are positive constants. Then Using eq.
(\ref{char}), we have that $Y_t=X_{\mathcal{T}_t}$ has
characteristic exponent given by

$$
\phi_{Y_t}(z)=\mathcal{L}_{\mathcal{T}_t}(\psi(z))=-C|z|^{\beta\alpha}
$$
where $C=\frac {AB^\alpha\Gamma(1-\alpha)}{\alpha}$. That is $Y_t$
is a $\beta\alpha-$Stable symmetric process. If $X_t$ be a
Brownian Motion, i.e. $\beta=2$, then $Y_t$ would be a
$2\alpha-$Stable symmetric process. As $\alpha <1$, we have that
$Y_t$ will be a process with heavy tails, which is an stylized
fact of the majority of the observed asset returns.
\subsubsection{Tempered subordination}
Assume that $\mathcal{T}_t$ has Lévy measure given by
$$\rho(x)=\frac {Ce^{-\lambda x}}{x^{1+\alpha}}1_{x>0},$$
where $C,\lambda>0$ and $\alpha\in (0,1).$ Then, we have
$$\mathcal{L}_{\mathcal{T}_t}(z)=
C\Gamma(-\alpha)\left[(\lambda-z)^\alpha-\lambda^\alpha\right]
$$
Now let $X_t$ be a Time-Changed Brownian motion with drift $\mu$,
i.e. $X_t=\mathcal{T}_t \mu +\sigma W(\mathcal{T}_t)$.
Then, using eq. (\ref{e:charexp}), we have:

$$
\psi(z)=-\frac {z^2}{2}\sigma^2+i\mu z,
$$
and using eq. (\ref{char}), we have that
$Y_t=X_{\mathcal{T}_t}$
has characteristic exponent given by

$$
\phi_{Y_t}(z)
=
\mathcal{L}_{\mathcal{T}_t}(\psi(z))
=
C
\Gamma(-\alpha)
\left[
(\lambda+\frac {z^2}{2}\sigma^2-i\mu z)^\alpha-\lambda^\alpha
\right]
$$

\section{Market Model}

Consider a Time-changed Brownian market where we have a riskless asset,
with price process denoted by
$\process B$,
with
\begin{equation*}
B_t=e^{rt},\qquad r\ge 0,
\end{equation*}
where we take $B_0=1$ for simplicity, and a risky asset, with
price process denoted by $\process S$,
\begin{equation}\label{e:market}
S_t=S_0e^{Y_t},\qquad S_0=e^y>0.
\end{equation}
Where $Y_t$ is a time-changed Brownian Motion with a drift, where
the time change is an independent Subordinator. Denote by
$(b,\sigma,\nu)$ the characteristics of the time-changed Brownian
with drift process\footnote{Here we assume conditions to guarantee
that this process is a Lévy process, see Appendix.}. Also, we
assume that the stock pays dividends, with constant rate
$\delta\ge 0$, and we assume that the probability measure $P$ is
the chosen equivalent martingale measure. In other words, prices
are computed as expectations with respect to $P$, and the
discounted and reinvested process
$\{e^{-(r-\delta)t}S_t\}$ is a $P$--martingale.\\

In order to this condition be satisfied, we need that
$$E\left[e^{-(r-\delta)t}S_t\right]=S_0,\forall t$$
In other words, $E(e^{Y_t})=e^{(r-\delta)t}$. That means that the
characteristic exponent of $Y$ must satisfy:
$$\psi_Y(1)=(r-\delta).$$ So to avoid arbitrage opportunities we
have to restrict our attention to time-changed Brownian process
such
that the exponential process $e^{Y_t-(r-\delta)t}$
be a $P$--martingale.\\

\section{Symmetry}
Consider a Time-changed Brownian market described above with
driving process characterized by $(b,\sigma,\nu)$. Now, consider a
market model with two assets, a deterministic savings account
$\process{\tilde B}$, given by
\begin{equation*}
\tilde B_t=e^{\delta t},\qquad r\ge 0,
\end{equation*}
and a stock $\process{\tilde S}$, modelled by
\begin{equation*}
\tilde{S}_t=Ke^{\tilde{Y}_t},\qquad S_0=e^x>0,
\end{equation*}
where $\process{\tilde{Y}}$ is a Lévy processes with
characteristics under $\tilde{P}$ given by
$(\tilde{b},\tilde{\sigma},\tilde{\nu})$. This market is the
\emph{dual market} in \citeasnoun{FajardoMordecki2006b}. Observe,
that in the dual market (i.e. with respect to $\tilde{P}$), the
process $\{e^{-(\delta-r)t}{\tilde
S}_t\}$ is a martingale.\\

 It is interesting to notice, that in a market with no jumps the
distribution (or laws) of the discounted (and reinvested) stocks
in both the given and dual markets coincide. It is then natural to
define a market to be \emph{symmetric} when this relation hold,
i.e. when
\begin{equation}\label{e:symmetry}
{\cal L}\big(e^{-(r-\delta)t+Y_t}\mid P\big)={\cal
L}\big(e^{-(\delta-r)t-Y_t}\mid\tilde{P}\big),
\end{equation}
meaning equality in law. \citeasnoun{FajardoMordecki2006b} derived
the characteristics of the {\it dual process} $\widetilde{Y}_t$,
In particular they obtained that a necessary and sufficient
condition for \eqref{e:symmetry} to hold is
\begin{equation}\label{e:pisymmetric}
\nu(dx)=e^{-x}\nu(-dx).
\end{equation}
This ensures $\tilde{\nu}=\nu$, and from this follows
$b-(r-\delta)=\tilde{b}-(\delta-r)$, giving \eqref{e:symmetry}, as
we always have $\tilde{\sigma}=\sigma$. \\

Now as we have assumed that
$Y_t=\mathcal{T}_t\mu+W(\mathcal{T}_t)$. If we denote the Lévy
measure of $\mathcal{T}_t$ by $\rho(dy)$. Then, we know by
\citeasnoun{Sato99}[Th. 30.1] that Lévy measures are related by
\begin{equation*}
\nu(dx)=\left[ \int_0^\infty \frac {1}{\sqrt{2\pi y}} e^{-\frac
{(x-\mu y)^2}{2y}}\rho(dy)\right]dx,
\end{equation*}
 we can express this relationship as
\begin{equation*}
\nu(dx)=e^{\mu x}f(x)dx,
\end{equation*}
where $$f(x)=\int_0^\infty \frac {1}{\sqrt{2\pi y}}
e^{-\frac{1}{2y}(x^2+\mu^2 y^2)}\rho(dy),$$ this density is even,
i.e. $f(x)=f(-x)$.
\begin{proposition}
A {\it Time-Changed Brownian Market} is symmetric if and only if
the drift is equal -1/2.
\end{proposition}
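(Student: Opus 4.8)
The plan is to use the symmetry criterion $\nu(dx)=e^{-x}\nu(-dx)$ from \eqref{e:pisymmetric}, translated into the explicit representation $\nu(dx)=e^{\mu x}f(x)\,dx$ that we just derived, where $f$ is even. My strategy is to rewrite both sides of \eqref{e:pisymmetric} in terms of $f$ and $\mu$, and then read off exactly what constraint on $\mu$ makes the identity hold for all $x$.

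First I would substitute $\nu(dx)=e^{\mu x}f(x)\,dx$ into the left-hand side of \eqref{e:pisymmetric}. For the right-hand side I need $\nu(-dx)$, i.e. the density evaluated at $-x$: since $\nu(dy)=e^{\mu y}f(y)\,dy$, we have $\nu(-dx)=e^{-\mu x}f(-x)\,dx=e^{-\mu x}f(x)\,dx$ using that $f$ is even. Thus \eqref{e:pisymmetric} becomes the pointwise identity
\begin{equation*}
e^{\mu x}f(x)=e^{-x}e^{-\mu x}f(x),\qquad\text{for (almost) all }x.
\end{equation*}
On the support of $f$ (where $f(x)\neq 0$) I may cancel $f(x)$, leaving $e^{\mu x}=e^{-x-\mu x}$, i.e. $e^{2\mu x}=e^{-x}$, which forces $2\mu=-1$, that is $\mu=-1/2$. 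This establishes that symmetry implies $\mu=-1/2$, and running the computation in reverse shows that $\mu=-1/2$ makes \eqref{e:pisymmetric} hold identically, giving the converse. Hence the market is symmetric if and only if the drift equals $-1/2$.

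The main obstacle is really a matter of rigor rather than difficulty: I must ensure the cancellation of $f(x)$ is legitimate, which requires that $f$ be strictly positive on a symmetric set of positive Lebesgue measure (so that the exponential identity is forced at more than one point and thus pins down $\mu$ uniquely). Since $f(x)=\int_0^\infty \frac{1}{\sqrt{2\pi y}}\,e^{-\frac{1}{2y}(x^2+\mu^2 y^2)}\rho(dy)$ is a strictly positive integral whenever $\rho$ is a nontrivial L\'evy measure of a subordinator, this positivity holds on all of $\R$, so the cancellation is valid everywhere and the conclusion $\mu=-1/2$ is unambiguous. I would also remark that the matching of the drift terms $b-(r-\delta)=\tilde b-(\delta-r)$ and the identity $\tilde\sigma=\sigma$, which together with $\tilde\nu=\nu$ yield \eqref{e:symmetry}, are automatic once \eqref{e:pisymmetric} holds, so verifying the L\'evy-measure condition is the only substantive step.
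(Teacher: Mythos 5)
Your proposal is correct and follows essentially the same route as the paper: substitute $\nu(dx)=e^{\mu x}f(x)\,dx$ into condition \eqref{e:pisymmetric}, use the evenness of $f$, and read off $\mu=-1/2$ from the resulting exponential identity. The only difference is that you spell out the details the paper leaves implicit (strict positivity of $f$ to justify cancellation, and the converse direction), which is a welcome addition in rigor but not a different argument.
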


\begin{proof}
By condition (\ref{e:pisymmetric}) we have that a market is
symmetric iff
$$e^{\mu x}f(x)dx=e^{-x}[e^{-\mu x}f(-x)],$$
from here $\mu=-1/2$.
\end{proof}

As an application of this Proposition we have the following
\begin{corollary}
\begin{itemize}
\item[a)] The CGMY Market model will be symmetric if and only if
$$G-M=-1.$$
\item[b)]The Meixner Market Model will be symmetric if and only if
$$2b+a=0$$
\end{itemize}
\end{corollary}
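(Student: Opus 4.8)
The plan is to treat the Corollary as a direct specialization of the Proposition: once the CGMY and Meixner processes are written as a Brownian motion with drift, time-changed by a subordinator, symmetry is governed entirely by whether the drift $\mu$ equals $-1/2$. By \citeasnoun{madanyor06} both processes admit such representations, so it suffices, for each model, to read off $\mu$ from its L\'evy measure. Concretely, I would invoke the factorization $\nu(dx)=e^{\mu x}f(x)\,dx$ with $f$ even, established earlier in the paper: the drift $\mu$ is precisely the exponential-tilt rate that symmetrizes the L\'evy density, so identifying $\mu$ reduces to splitting the known density into an exponential factor times an even function and matching.

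For part (a), I would start from the CGMY L\'evy density, which on the positive half-line behaves like $Ce^{-Mx}/x^{1+Y}$ and on the negative half-line like $Ce^{-G|x|}/|x|^{1+Y}$, the constant $C$ and the power $1+Y$ being common to both sides. Writing $\nu(x)=e^{\mu x}f(x)$ and demanding that $f$ be even forces the tilted exponents on the two half-lines to agree, namely $M+\mu=G-\mu$, which yields $\mu=(G-M)/2$. Applying the Proposition, the market is symmetric if and only if $\mu=-1/2$, that is $(G-M)/2=-1/2$, equivalently $G-M=-1$.

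For part (b), I would start from the Meixner L\'evy density $\nu(x)=d\,e^{bx/a}/\bigl(x\sinh(\pi x/a)\bigr)$. The key observation is that $x\sinh(\pi x/a)$ is even, being a product of two odd factors, so the non-exponential part $f(x)=d/\bigl(x\sinh(\pi x/a)\bigr)$ is already even and the entire exponential tilt is carried by $e^{bx/a}$. Hence $\mu=b/a$, and the Proposition gives symmetry if and only if $b/a=-1/2$, equivalently $2b+a=0$.

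The only delicate point is correctly matching each model's parameters to the drift $\mu$. For CGMY this rests on the sign convention fixing which of $G$, $M$ governs the left versus the right tail; the stated form $G-M=-1$ (rather than $M-G=-1$) corresponds to $M$ controlling the positive-jump tail and $G$ the negative-jump tail. For Meixner the crucial verification is the evenness of $x\sinh(\pi x/a)$, after which the identification $\mu=b/a$ is immediate and no further computation is required.
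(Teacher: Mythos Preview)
Your proof is correct and follows essentially the same route as the paper: apply the Proposition once the drift $\mu$ is identified, yielding $(G-M)/2=-1/2$ and $b/a=-1/2$. The only difference is that the paper's proof is a one-liner that cites \citeasnoun{madanyor06} directly for the drift values $\mu=(G-M)/2$ and $\mu=b/a$, whereas you recover these yourself from the L\'evy densities via the factorization $\nu(dx)=e^{\mu x}f(x)\,dx$ with $f$ even --- a more self-contained but otherwise equivalent argument.
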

\begin{proof}
 Since, \citeasnoun{madanyor06} have obtained the
explicitly representations of CGMY model and Meixner Model as a
Time-changed Brownian motion with drift $\frac {G-M}{2}$ and
$\frac ba$, respectively. The result follows.
\end{proof}
Moreover, in \citeasnoun{CGMY02} the drift is estimated under the
market risk neutral measure and in the majority of cases the drift
is negative and less that -0.5. Also, \citeasnoun{Scoutens2001}
estimates the values of $a$ and $b$ and obtain similar evidence.
It give us evidence against market symmetry.
\section{Conclusions} \hspace{0.5cm} In a Time-changed Brownian market
we have shown that market will be symmetric if and only if the drift is equal to -1/2.\\

 Since, Time-Changed Brownian motion allow us to model
correlations by correlating the Brownian motions. Another
important application that can be address with duality techniques
is the pricing of bidimensional derivatives in a Time-Changed
Brownian context as is done by \citeasnoun{FajardoMordecki2006a}
for the case of Lévy processes.
\section{Appendix}
The following Theorem is taken from \citeasnoun{CT04}[Th. 4.3,
Pag. 113] and it gives conditions for a Time-Changed Brownian
motion with drift to be a Lévy process.
\begin{theorem}
Let $\nu$ be a Levy measure and $\mu\in \R$. There exists a Lévy
process $Y_t$ with Lévy measure $\nu$ such that $Y_t = W(Z_t) +
\mu Z_t$ for some subordinator $Z_t$ and some Brownian motion
$W_t$ independent from $Z$, if and only if the following
conditions are satisfied:
\begin{enumerate}
\item $\nu$ is absolutely continuous with density $\nu(x)$
 \item $\nu (x)e^{-\mu x}= \nu(-x)e^{\mu x}$ \item
$\nu(\sqrt(x))e^{-\mu\sqrt(x)}$ is a completely monotonic function
on $(0,1)$
\end{enumerate}
\end{theorem}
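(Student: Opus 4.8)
The plan is to prove both implications by combining the subordination formula for L\'evy measures with Bernstein's characterization of completely monotone functions as Laplace transforms of positive measures. First I would fix the computation underlying everything: for a subordinator $Z$ with L\'evy measure $\rho$ on $(0,\infty)$ and an independent standard Brownian motion $W$, the process $Y_t=W(Z_t)+\mu Z_t$ is a L\'evy process, and by \citeasnoun{Sato99}[Th. 30.1] its L\'evy measure is
\begin{equation*}
\nu(dx)=\left[\int_0^\infty \frac{1}{\sqrt{2\pi y}}\,e^{-\frac{(x-\mu y)^2}{2y}}\,\rho(dy)\right]dx.
\end{equation*}
Completing the square in the exponent rewrites this as $\nu(x)=e^{\mu x}f(x)$, where $f(x)=\int_0^\infty (2\pi y)^{-1/2}e^{-x^2/(2y)-\mu^2 y/2}\rho(dy)$ depends on $x$ only through $x^2$ and is therefore even.

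For the necessity direction, I would read the three properties directly off this representation. Absolute continuity (Condition 1) is immediate. Evenness of $f$ gives $\nu(x)e^{-\mu x}=f(x)=f(-x)=\nu(-x)e^{\mu x}$, which is exactly Condition 2. For Condition 3, the substitution $s=1/(2y)$ turns $f(\sqrt{u})$ into $\int_0^\infty e^{-su}\,m(ds)$ for a positive measure $m$; since $\nu(\sqrt{x})e^{-\mu\sqrt{x}}=f(\sqrt{x})$, Bernstein's theorem identifies this function as completely monotone, which is Condition 3.

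The sufficiency direction is the substantive one. Assuming Conditions 1--3, I would set $f(x)=\nu(x)e^{-\mu x}$, which Condition 2 forces to be even and Condition 3 forces to be completely monotone after the substitution $x\mapsto\sqrt{x}$. Bernstein's theorem then produces a positive measure $m$ on $(0,\infty)$ with $f(\sqrt{u})=\int_0^\infty e^{-su}\,m(ds)$. Undoing the change of variables $s=1/(2y)$ with the weight dictated by the Gaussian kernel, I would define a candidate subordinator measure $\rho$ by $\rho(dy)=\sqrt{\pi/s}\,e^{\mu^2/(4s)}m(ds)$ under $s=1/(2y)$, designed precisely so that reinserting $\rho$ into the subordination formula recovers $f$, and hence $\nu$.

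The hard part will be verifying that this $\rho$ is genuinely the L\'evy measure of a subordinator, that is, $\int_0^\infty(1\wedge y)\,\rho(dy)<\infty$, using only that $\nu$ is a L\'evy measure, $\int_{\R}(1\wedge x^2)\,\nu(dx)<\infty$. I would control the two tails separately: the mass of $\rho$ near $y=0$ (equivalently $s\to\infty$, where the weight $\sqrt{\pi/s}\,e^{\mu^2/(4s)}$ decays) must be matched to the small-$x$ behavior of $\nu$ through $\int_{|x|<1}x^2p_y(x)\,dx\sim y$, while the mass near $y=\infty$ (equivalently $s\to0$, where the weight blows up) must be matched to the integrability of $\nu$ away from the origin. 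Showing that the blow-up of the weight near $s=0$ is absorbed by the decay of $m$ there is the delicate estimate. Once $\rho$ is known to be admissible, I would take $Z$ to be the subordinator with L\'evy measure $\rho$ and zero drift, take $W$ independent, and conclude from the necessity computation that $Y_t=W(Z_t)+\mu Z_t$ has L\'evy measure exactly $\nu$, closing the equivalence.
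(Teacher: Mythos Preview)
The paper does not prove this theorem at all: it is quoted in the Appendix with the explicit attribution ``The following Theorem is taken from \citeasnoun{CT04}[Th.~4.3, Pag.~113]'' and no argument is supplied. There is therefore no in-paper proof to compare your proposal against.

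That said, your outline is essentially the standard argument one finds in Cont--Tankov. The necessity direction is exactly right: the subordination formula (the paper itself quotes it from \citeasnoun{Sato99}[Th.~30.1]) immediately yields absolute continuity, the factorisation $\nu(x)=e^{\mu x}f(x)$ with $f$ even gives Condition~2, and the change of variable $s=1/(2y)$ exhibits $f(\sqrt{u})$ as a Laplace transform, hence completely monotone by Bernstein. For sufficiency your reconstruction via Bernstein is the correct idea, and you have correctly identified the one genuine technical step: checking that the measure $\rho$ you manufacture satisfies $\int_0^\infty (1\wedge y)\,\rho(dy)<\infty$ using only $\int_{\R}(1\wedge x^2)\,\nu(dx)<\infty$. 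Your sketch of how the small-$y$ and large-$y$ tails of $\rho$ correspond to the small-$|x|$ and large-$|x|$ behaviour of $\nu$ is on the right track; carrying out the estimates carefully is what remains. One minor remark: the theorem only prescribes the L\'evy measure of $Y$, so your choice of zero drift for $Z$ is legitimate, but you should note in passing that the resulting $Y$ then has no Gaussian part and that its drift is determined by $\rho$ and $\mu$; the theorem does not constrain these.
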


A function $f:[a,b]\rightarrow \R $ is called completely monotonic
if all derivatives exist and $(-1)k \frac {d^k
f(u)}{du^k}>0,\;\;\forall k \geq 1.$

\bibliographystyle{econometrica}
\bibliography{catalog}
\end{document}